\definecolor{marin}{rgb}   {0.,   0.3,   0.7} 
\definecolor{rouge}{rgb}   {0.8,   0.,   0.} 
\definecolor{sepia}{rgb}   {0.8,   0.5,   0.} 
\theoremstyle{plain} 
\newtheorem{theorem}{Theorem}[section]
\newtheorem{lemma}[theorem]{Lemma}
 \theoremstyle{remark}
\newtheorem{remark}[theorem]{Remark}
\newcommand{\R}{  \mathbb{R}   }
\newcommand{\C}{  \mathbb{C}   }
\newcommand{\Z}{  \mathbb{Z}   }
\newcommand{\N}{  \mathbb{N}   }
\newcommand{\Fc}{  \mathcal{F}   }
\newcommand{\T}{  \mathbb{T}   }
\newcommand{\dd}{  \text{d}   }
\newcommand{\ka}{  \kappa   }
\renewcommand{\phi}{  \varphi  }
\newcommand{\be}{\begin{equation}}
\newcommand{\ee}{\end{equation}}
\newcommand{\ben}{\begin{equation*}}
\newcommand{\een}{\end{equation*}}
\newcommand{\Norm}[2]{\|#1\|\left.\vphantom{T_{j_0}^0}\!\!\right._{#2}}
\numberwithin{equation}{section}
\def\mod#1{\left\lfloor#1\right\rceil}
\newcommand{\I}{i}
\newcommand{\E}{e}
\newcommand{\BK}{{\mathcal{B}^K}}
\newcommand{\BKO}{{\mathcal{B}^{\kappa}}}
\newcommand{\K}{K}
\newcommand{\indk}{k}
\newcommand{\dx}{\delta x}
\newcommand{\UK}{U}
\author{Erwan Faou}
\address{INRIA \& ENS Cachan Bretagne  \\
Avenue Robert Schumann F-35170 Bruz, France. } 
\email{Erwan.Faou@inria.fr}
 \author{Tiphaine J\'ez\'equel}
\address{INRIA \& ENS Cachan Bretagne  \\
Avenue Robert Schumann F-35170 Bruz, France. } 
\email{Tiphaine.Jezequel@inria.fr}
\title[Instabilities and resonant time steps]
{Resonant time steps and instabilities in the numerical integration of Schr\"odinger equations.}
\begin{document}

\begin{abstract}
We consider the linear and non linear cubic Schr\"odinger equations with periodic boundary conditions, and their approximations by splitting methods. 
We prove that for a dense set of  arbitrary small time steps, there exists numerical solutions leading to strong numerical instabilities preventing the energy conservation and regularity bounds obtained for the exact solution. We analyze rigorously these instabilities in the semi-discrete and fully discrete cases. 
\end{abstract}

\subjclass{ 37M15, 65P40, 35B34 }
\keywords{Nonlinear Schr\"odinger equation, Resonances, instabilities}
\thanks{
}

\maketitle

\section{Introduction}

The goal of this paper is to show that resonant time-steps in the numerical integration of Schr\"odinger equation can lead to numerical instability in the sense that the qualitative behavior of the numerical solution is radically different from the one of the continuous solution. 
The equations we consider here are of the form 
\begin{equation}
\label{eq:sch}
i \partial_t u  = - \Delta u + f(x,|u|^2) u, \quad u(0,x) = u^0(x)
\end{equation}
where $u(t,x)$ depends on the time $t$ and the space variable $x \in \T = \R \slash (2\pi \Z)$ and $u_0$ a given function. The function $f$ is real valued, and we will only consider the two following cases: 
$$
f(x,|u|^2) = V(x) \quad\mbox{(linear)}\quad \mbox{and}\quad f(x,|u|^2) = \sigma|u|^2 \quad \mbox{(NLS)},
$$
where $V(x)$ is a real valued smooth potential and $\sigma \in \{ \pm 1\}$. In other words, we consider the linear Schr\"odinger equation or the cubic nonlinear Schr\"odinger equation (NLS). In both cases, the solution preserves the $L^2$ norm 
$$
\Norm{u(t,x)}{L^2} := \Big(\frac{1}{2\pi}\int_{0}^{2\pi}  |u(t,x)|^2 \dd x\Big)^{1/2} = \Norm{u^0}{L^2},\quad \mbox{for all}\quad t \in \R
$$
and the energy
$$
H(u) = \frac{1}{4\pi} \left( \int_0^{2\pi} |\partial_x u|^2 + f(x,|u|^2) |u|^2 \dd x\right).
$$

We consider the numerical approximation of \eqref{eq:sch} by splitting methods obtained by solving alternatively the free linear Schr\"odinger equation 
\begin{equation}
\label{eq:free}
i \partial_t u = - \Delta u, \quad \mbox{denoted by } u(t) = e^{it \Delta} u(0), 
\end{equation}
and the potential part 
\begin{equation}
\label{eq:varphi}
i \partial_t u = f(x,|u|^2)u, \quad \mbox{denoted by } u(t) = \varphi_t( u(0)), 
\end{equation}
where the flow $\varphi_t(u(0))$ is given explicitely by the formula
$$
\varphi_t(u(0) ) = e^{- i t f(x,|u(0,x)|^2)} u(0),
$$
owing to the fact that $|u(t,\cdot)|^2$ is constant along the solutions of \eqref{eq:varphi}. 

With these notations, the standard Lie-splitting method is defined by the formula
$$
u^{n+1} = e^{ i \tau \Delta} \circ \varphi_\tau (u^n), \quad n \geq 0,
$$
which gives approximation $u^n$ of the exact solution $u(n\tau)$. To implement this method, we use the classical pseudo-spectral Fourier method with $K$ modes. Note that this scheme preserves the $L^2$ norm for all times. 

Such schemes (as well as the symmetrized Strang splitting version) have been extensively studied recently, both in the semi and fully discrete case. 
Convergence results for fixed time horizon can be found in \cite{JL00,L08a} and \cite{F11}. Concerning the long time behavior of such methods, let us recall the main results: 

\begin{itemize}
\item In the linear case $f(x,|u|^2) = V(x)$, and in the semi-discrete setting, it has been shown in \cite{DF07} that if the potential is {\em small} and if the time step satisfies a non resonance condition of the form 
\begin{equation}
\label{eq:nonres}
\forall\, n \in \Z,\quad \left| \frac{1 - e^{i \tau n}}{\tau} \right| \geq \frac{\gamma}{|n|^\nu}
\end{equation}
then the numerical solution can be controlled over very long times with respect to the size of the potential, which implies the long time preservation of the energy. Still in the linear case, but when no smallness assumption on the potential is made, the construction of a modified energy is possible, see \cite{DF09}, but only for implicit-explicit schemes where the free flow $e^{i \tau \Delta}$ is approximated by the midpoint rule which implies a regularization in the high frequencies of the Laplace operator. This allows a control of the $H^1$ norm of low modes. 

\item In the fully discrete case, both in the linear and nonlinear cases, it has been shown in \cite{F11,FG11} that under a Courant-Friedrichs-Lewy (CFL) condition of the form $\tau K^2 < C$ for some constant $C$, it is possible to construct a modified energy that is almost preserved by the numerical solution $u^n$. In these cases, it is possible to prove that the discrete $H^1$ norm is bounded over long time interval $\tau^{-N}$ where $N$ depends on the CFL constant (and for small initial data in the nonlinear case). This in turn implies the preservation of the energy over long times. 
The CFL restriction is here assumed to avoid small denominators of the form \eqref{eq:nonres}: in the linear case for example, a such restriction allows to show that the denominator
$$
\frac{i \tau (k^2 - \ell^2) }{1 - e^{i \tau (k^2 - \ell^2)}}
$$
is uniformly bounded for $k$ and $\ell$ in $\Z$. 
Using such analysis, it can be proved in \cite{BFG} that splitting methods applied to NLS set on the real line with a discretization by finite differences and Dirichlet boundary conditions preserve the orbital stability of ground state over long times, provided that the CFL condition is satisfied. 

Finally, using different techniques (normal form, Modulated Fourier Expansion), it has been shown that in the nonlinear case, the plane wave solutions $u(t,x) = \rho e^{-i t |\rho|^2}$ of NLS are stable by small perturbation and numerical approximation in high-order Sobolev norms $H^s$ (the same holds true for any plane wave solution and the sign of $\sigma$ matters in the statements,  see \cite{FGL1,FGL2} for precise formulations). 

\end{itemize}

Related to this analysis, let us also mention the case where the nonlinear Schr\"odinger equation is perturbed by a potential: in this case, and under a non resonance condition on the frequencies of the linear operator, it can be shown that the exact solution is stable in $H^s$ norms for high Sobolev indices $s$ (see for example \cite{BG06}). In this case, the numerical reproduction has been studied in \cite{GL08a,GL08b} using Modulated Fourier Expansion and \cite{FGP1,FGP2} using normal form analysis. In each case, non resonance conditions have to be imposed on the time steps as in the previous analysis. 

As numerous numerical experiments indicate, when the time step $\tau$ violates a non resonance condition of the form \eqref{eq:nonres}, a drift in the energy can be generally observed (see for instance \cite{DF07,F11}). However all the previous results concern precisely cases where such phenomenon do not appear, and deal with stability results. 

The goal of this paper is precisely to show that when $\tau$ is {\em resonant}, that is a rational multiple of $2 \pi$ of the form $\tau = 2 \pi \frac{p}{q}$ with $(p,q) \in \Z$, then we can construct situations (potential, initial value) such that the energy is not preserved by the numerical scheme.

The main idea is that for such time steps, the space of the $2\pi/q$-periodic functions 
\begin{equation}
\label{eq:Wq}
W_q:= \{u\in H^1(\T) \ |  \ \hat u(k)=0 \text{ for all } k\neq 0 \mod q \}
\end{equation}
is invariant by the linear flow $e^{i \tau \Delta}$, reducing the discrete dynamics to the potential part. 

In the semi-discrete case, we prove that the energy actually tends to infinity. In the fully discrete case, we prove that the energy can grow up to the maximum possible $\delta x^{-2}$ where $\delta x$ is the mesh stepsize, which cancels any hope of energy conservation over long times. 

In the semi-linear case, it is interesting to mention that the examples given in this paper show that the CFL given in \cite{F11} to ensure the preservation of $H^1$ norm is sharp (see Remark \ref{rk38}). 

\section{Semi-discrete case}

\subsection{Notations}

For a given funtion $f$ defined on the torus, we denote by 
$$
\hat u (k) =  \frac{1}{2\pi} \int_0^{2\pi} u(x) e^{- i k x} \dd x
$$
its Fourier transform, for $k \in \Z$. We then define the Sobolev norms 
$$
\Norm{u}{H^s} := \left( \sum_{k \in \Z} ( 1 + |k|^2)^{s} |\hat u(k)|^2  \right)^{1/2}. 
$$

\subsection{Flow of the potential part}
The following result shows that the solution of the potential part \eqref{eq:varphi} generically does not preserve the $H^1$ norm. 
\begin{lemma}
\label{eq:lem1}
Let $u(x)$ and  $V(x)$ in $H^1$.  Then we have
$$
\Norm{ e^{i t V( x)} u( x) }{H^1} \geq |t| \Norm{V'(x) u(x)}{L^2} - \Norm{u}{H^1}
$$
In particular, if 
$$
\Norm{V'(x) u(x)}{L^2} \neq 0. 
$$
Then we have 
\begin{equation}
\label{eq:growth}
\lim_{t \to \infty} \Norm{e^{it V(x)} u(x)}{H^1} = +\infty. 
\end{equation}
\end{lemma}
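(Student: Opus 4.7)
The plan is to reduce everything to a direct computation of the derivative of $e^{itV(x)}u(x)$ combined with Parseval's identity and the reverse triangle inequality. The key observation is that the modulus $|e^{itV(x)}|$ equals $1$ pointwise, so multiplication by the phase preserves $L^2$ norms; all the growth must come from the derivative via the chain rule, whose factor $itV'(x)$ grows linearly in $t$.

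First I would note that Parseval's identity, applied to the norm defined in Section 2.1, gives $\|v\|_{H^1}^2=\|v\|_{L^2}^2+\|v'\|_{L^2}^2$, so in particular
\ben
\Norm{e^{itV(x)}u(x)}{H^1}\ \geq\ \Norm{\partial_x\bigl(e^{itV(x)}u(x)\bigr)}{L^2}.
\een
Since $H^1(\T)\hookrightarrow L^\infty(\T)$, the product $V'(x)u(x)$ lies in $L^2$ and the function $e^{itV(x)}u(x)$ is in $H^1(\T)$ (this is a Banach algebra argument, using that $V,u\in H^1$ and that $z\mapsto e^{itz}$ is smooth). The (weak) derivative then satisfies
\ben
\partial_x\bigl(e^{itV(x)}u(x)\bigr)=itV'(x)e^{itV(x)}u(x)+e^{itV(x)}u'(x).
\een

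Next I would apply the reverse triangle inequality in $L^2$ and use the pointwise identity $|e^{itV(x)}|=1$. This yields
\ben
\Norm{\partial_x\bigl(e^{itV(x)}u(x)\bigr)}{L^2}\ \geq\ |t|\,\Norm{V'(x)u(x)}{L^2}-\Norm{u'(x)}{L^2}.
\een
Combining with $\|u'\|_{L^2}\leq \|u\|_{H^1}$ gives the stated inequality. The limit \eqref{eq:growth} then follows immediately: assuming $\|V'(x)u(x)\|_{L^2}\neq 0$, the right-hand side tends to $+\infty$ as $|t|\to\infty$ since $\|u\|_{H^1}$ is fixed.

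I do not anticipate any serious obstacle: the only point requiring a moment of care is justifying the chain rule at the $H^1$ regularity level, but the one-dimensional Sobolev embedding $H^1(\T)\hookrightarrow C^0(\T)$ makes this routine, as both $u$ and $V$ are continuous and $V',u'\in L^2$ suffice to define the distributional derivative of the product by the Leibniz rule. Everything else is a two-line reverse triangle inequality argument.
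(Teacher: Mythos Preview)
Your proposal is correct and follows exactly the same approach as the paper: the paper's proof consists of the single displayed formula $\partial_x(e^{itV(x)}u(x)) = (it)V'(x)e^{itV(x)}u(x) + e^{itV(x)}u'(x)$ together with the remark that the result follows directly. Your write-up simply makes explicit the reverse triangle inequality, the fact that $|e^{itV}|=1$, and the regularity justification that the paper leaves implicit.
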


\begin{proof}
The result comes directly from the calculus
$$
\partial_x ( e^{i t V( x)} u( x) )   =  (it) V'(x) e^{i t V( x)} u(x) + e^{i t V(x)} u'(x). 
$$
\end{proof}

\begin{remark}
Using the Faa-di Bruno formula, we easily get that more generally, $\Norm{e^{i t V( x)} u( x) }{H^s}$ behaves like $t^s$ when $t \to \infty$. 
\end{remark}
\subsection{Linear equation}
We consider the linear Schr\"odinger equation 
\begin{equation}
\label{eq:LS}
i \partial_t u(t,x)  = -\Delta u(t,x) + V(x) u(t,x), \quad u(0,x) = u^0(x)
\end{equation}
where $V(x)$ is a given potential. 
The energy in this case is given by 
\begin{equation}
\label{eq:HV}
H(u) = \frac{1}{4\pi} \Big( \int_0^{2\pi} | \partial_x u(x) |^2 + V(x) |u(x)|^2 \dd x \Big), 
\end{equation}
which is preserved along the exact flow of \eqref{eq:LS}. 

\begin{theorem}
Let $(p,q) \in \Z \times \N^*$ and $V \in W_{q}$. 
For all $n$, and $u^0(x) \in H^1$, we define the sequence $u^n$ by the induction formula
$$
u^{n+1} = \exp( i \tau \Delta ) \circ \exp( -  i \tau V(x)) u^{n}, \quad n \geq 0, \quad \tau = 2\pi \frac{p}{q}. 
$$
If $\Norm{V'(x) u^0(x) }{L^2}=c_0 \neq 0 $, then there exist some constants $c,c'$ such that 
$$
\Norm{u^n}{H^1} \geq c_0n\tau-c , \quad \mbox{and} \quad H(u^n) \geq \frac{1}{2}(c_0n\tau-c')^2, 
$$
where $H(u)$ is the energy defined in \eqref{eq:HV}. In particular,
$$
\lim_{n \to \infty} \Norm{u^n}{H^1} = + \infty, \quad \mbox{and} \quad \lim_{n \to \infty} H(u^n) = +\infty. 
$$
\end{theorem}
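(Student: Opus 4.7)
The plan is to exploit the resonance $\tau = 2\pi p/q$ in order to show that, up to phases that depend only on the residue class modulo $q$ of the Fourier index, the scheme acts on $u^0$ as multiplication by $e^{-in\tau V(x)}$; Lemma~\ref{eq:lem1} then applies essentially verbatim.

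The key computation is that for any $k = r + mq \in \Z$,
$$
\tau k^2 = \frac{2\pi p}{q}(r+mq)^2 \equiv \frac{2\pi p}{q} r^2 = \tau r^2 \pmod{2\pi},
$$
so $e^{i\tau\Delta}$ acts on every Fourier mode in the residue class $r \pmod q$ as the single scalar $e^{-i\tau r^2}$. Denoting by $P_r$ the $L^2$-orthogonal projector onto these modes, this reads $e^{i\tau \Delta} P_r = e^{-i\tau r^2} P_r$. Because $V \in W_q$ is $2\pi/q$-periodic, so is $e^{-i\tau V(x)}$, so multiplication by it commutes with every $P_r$. Combining the two facts, one step of the scheme restricted to $P_r$ reads $P_r u^{n+1} = e^{-i\tau r^2} e^{-i\tau V} P_r u^n$, and a straightforward induction on $n$ yields
$$
P_r u^n = e^{-i n \tau r^2}\, e^{-i n \tau V(x)}\, P_r u^0, \qquad r = 0,\dots,q-1.
$$
Summing over $r$ (and using again that $e^{-in\tau V}$ commutes with $P_r$) shows that $\hat u^n(k)$ and $\widehat{e^{-in\tau V} u^0}(k)$ differ only by a unit-modulus factor depending solely on $k \bmod q$, so $|\hat u^n(k)| = |\widehat{e^{-in\tau V} u^0}(k)|$ for every $k \in \Z$; in particular
$$
\Norm{u^n}{H^1} = \Norm{e^{-i n\tau V(x)} u^0}{H^1}, \qquad \Norm{u^n}{L^2} = \Norm{u^0}{L^2}.
$$

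Applying Lemma~\ref{eq:lem1} with $t = -n\tau$ and $u = u^0$ then gives the first inequality $\Norm{u^n}{H^1} \geq c_0 n\tau - c$ with $c := \Norm{u^0}{H^1}$. For the energy, I would combine $\Norm{\partial_x u^n}{L^2}^2 = \Norm{u^n}{H^1}^2 - \Norm{u^n}{L^2}^2$ with conservation of the $L^2$ norm and the crude bound $|\int_0^{2\pi} V |u^n|^2 \dd x | \leq 2\pi \Norm{V}{L^\infty} \Norm{u^0}{L^2}^2$ to produce $H(u^n) \geq \tfrac{1}{2}(c_0 n\tau - c)^2 - C$ for some constant $C$, and then absorb the additive constant into a slightly larger shift $c'$.

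The only nontrivial point is the mode-class diagonalization that yields the identity $|\hat u^n(k)| = |\widehat{e^{-in\tau V} u^0}(k)|$; once this is in place, the $H^1$ growth is immediate from Lemma~\ref{eq:lem1} and the energy estimate is routine.
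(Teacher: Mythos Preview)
Your proof is correct and follows essentially the same approach as the paper: both exploit that $e^{i\tau\Delta}$ and multiplication by $e^{-i\tau V}$ commute when $V \in W_q$ and $\tau = 2\pi p/q$, so that the scheme reduces to $e^{in\tau\Delta} e^{-in\tau V} u^0$ and Lemma~\ref{eq:lem1} applies after using that $e^{in\tau\Delta}$ is an $H^1$-isometry. The paper verifies the commutation by computing the Fourier matrix entries of $[e^{i\tau\Delta}, V]$ directly, while you phrase the same computation via the residue-class projectors $P_r$; these are equivalent packagings of the same argument.
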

\begin{proof}
Given that $V\in W_q$, we have $\hat V(k) = 0$ for $k \neq 0 \mod q$. 
Let us consider the commutator $[e^{2i\pi \frac{p}{q}\Delta}, V] = e^{2i\pi \frac{p}{q}\Delta}V - V e^{2i\pi \frac{p}{q}\Delta}$. Its coefficients in terms of Fourier operator are given by 
\begin{eqnarray*}
([e^{2i\pi \frac{p}{q}\Delta}, V])_{k\ell} &=& \hat V(k -\ell) (e^{2i\pi \frac{p}{q} k^2} - e^{2i\pi \frac{p}{q}\ell^2})  \\
&=& e^{2i\pi \frac{p}{q}\ell^2} \hat V(k -\ell) (e^{2i\pi \frac{p}{q}(k^2 - \ell^2)} - 1). 
\end{eqnarray*}
When $k - \ell \neq 0 \mod q$, the term is zero because $\hat V(k- \ell) = 0$. When $k - \ell = 0 \mod q$, then $(k^2 - \ell^2) = m q (k + \ell)$ for some number $m \in \Z$,  and 
$$
2\pi \frac{p}{q} (k^2 - \ell^2) = 2\pi p m (k + \ell) \in 2 \pi \Z, 
$$
so that the commutator vanishes again. 
Hence we have $[e^{2i\pi \frac{p}{q}\Delta}, V] = 0$ from which we easily deduce that $[e^{2i\pi \frac{p}{q} \Delta}, e^{-2i\pi \frac{p}{q} V}] = 0$. 
This implies that 
$$
u^{n} = e^{2in\pi \frac{p}{q}\Delta} \circ e^{- 2in\pi \frac{p}{q}V} u^0. 
$$
As $e^{i \tau \Delta}$ is an isometry for the $H^1$ norm, this shows that 
$$
\Norm{u^{n}(x)}{H^1} = \Norm{e^{- 2in\pi \frac{p}{q}V } u}{H^1}.
$$
The first result is then given by Lemma \ref{eq:lem1}. The second part can be easily proved using the fact that 
$$
H(u) \geq \frac{1}{2} \Norm{u}{H^1}^2 - C \Norm{u}{L^2}^2
$$
where the constant $C$ depends on $\Norm{V}{H^1}$. The fact that $\Norm{u^n}{L^2} = \Norm{u^0}{L^2}$ then proves the result. 
\end{proof}

\subsection{Nonlinear equation}

We consider now the equation 
$$
i \partial_t u(t,x)  = -\Delta u(t,x) + \sigma|u(t,x)|^2 u(t,x), 
$$
where $\sigma \in \{\pm 1\}$. 
The energy associated with this equation is now given by 
\begin{equation}
\label{eq:HNLS}
H(u) = \frac{1}{4\pi} \Big( \int_0^{2\pi} | \partial_x u(x) |^2 + \frac\sigma{2} |u(x)|^4 \dd x \Big). 
\end{equation}
\begin{lemma}
Let $(p,q) \in \Z \times \N^*$. Then for all $u \in W_{q}$, we have 
$$
\exp\Big( 2i \pi \frac{p}{q} \Delta\Big) u = u \quad \mbox{and} \quad \exp\Big( 2i \pi \frac{p}{q^2} \Delta\Big) u = u
$$
\end{lemma}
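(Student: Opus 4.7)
The plan is to diagonalize the operator $e^{2i\pi \frac{p}{q}\Delta}$ (and its cousin with $q^2$) in the Fourier basis, where the Laplacian acts by multiplication by $-k^2$ on the $k$-th Fourier mode. Concretely, for any $v \in H^1(\T)$ and $s \in \R$, the identity $\widehat{e^{is\Delta} v}(k) = e^{-is k^2} \hat v(k)$ holds. So to verify $e^{2i\pi \frac{p}{q}\Delta}u = u$ for $u \in W_q$, it suffices to show that the multiplier $e^{-2i\pi \frac{p}{q} k^2}$ equals $1$ on the spectral support of $u$, and similarly for the second multiplier.

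The support condition in the definition of $W_q$ forces $\hat u(k)=0$ unless $k = mq$ for some $m \in \Z$. For such $k$ we have $k^2 = m^2 q^2$. Substituting this into the first phase gives
\[
\frac{p}{q}\,k^2 \;=\; \frac{p}{q}\,m^2 q^2 \;=\; p\, m^2\, q \;\in\; \Z,
\]
so $e^{-2i\pi \frac{p}{q} k^2} = 1$. For the second phase,
\[
\frac{p}{q^2}\,k^2 \;=\; \frac{p}{q^2}\,m^2 q^2 \;=\; p\, m^2 \;\in\; \Z,
\]
so $e^{-2i\pi \frac{p}{q^2} k^2} = 1$ as well.

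Combining these two observations with the Fourier representation of $u \in W_q$, both operators act as the identity on $u$, which is the claim. There is no real obstacle here; the only thing to be careful about is the sign/normalization convention in the definition of $e^{is\Delta}$ on Fourier side, but the conclusion is insensitive to it since the exponents are integer multiples of $2\pi$ in both cases.
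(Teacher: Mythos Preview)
Your proof is correct and follows essentially the same route as the paper: both expand $u\in W_q$ in Fourier modes supported on $k=mq$ and verify directly that the multipliers $e^{-2i\pi \frac{p}{q}k^2}$ and $e^{-2i\pi \frac{p}{q^2}k^2}$ equal $1$ on that support. The paper writes the computation for the first multiplier and says the second is similar, whereas you carry out both; otherwise the arguments are identical.
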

\begin{proof}
By assumption, we can expand $u$ in Fourier series as 
$$
u(x) = \sum_{n \in \Z} \hat u(qn) e^{i q n x}. 
$$ 
Let $v := \exp( 2i \pi \frac{p}{q} \Delta) u$. The fonction $v$ can be written explicitely
$$
v(x) = \sum_{n \in \Z} \hat u(qn) e^{i q n x - 2i \pi \frac{p}{q} |qn|^2} = \sum_{n \in \Z} \hat u(qn) e^{i q n x - 2i \pi |p|q n^2} = u(x). 
$$
This proves the first part of the Lemma. The second is proved similarly. 
\end{proof}

Recall that for a function $u^0$, we denote by $\varphi_t(u^0)$ the solution of 
$$
i \partial_t u(t,x) = \sigma |u(t,x)|^2 u(t,x), \quad u(0,x) = u^0(x),
$$
and we have explicitely, 
\begin{equation}
\label{eq:flown}
u(t,x) = \exp(-i \sigma t |u^0(x)|^2) u^0(x). 
\end{equation}
We prove the following: 
\begin{theorem}
Let $(p,q) \in \N^*\times \Z$. For a given smooth function $u^0(x)$, we define for all $n$ the functions $u^n$ defined by 
$$
u^{n+1} = \exp( i \tau \Delta ) \circ \varphi_{\tau }(u^{n}), \quad n \geq 0, \quad \tau = 2\pi \frac{p}{q} \text{ or } 2\pi \frac{p}{q^2}. 
$$
If $u^0 \in W_{q}$ satisfies, 
$$
\Norm{u^0 \partial_x (|u^0|^2)}{L^2} =c_0\neq 0, 
$$
then there exist some constants $c,c'$ such that 
$$
\Norm{u^n}{H^1} \geq c_0n\tau-c , \quad \mbox{and} \quad H(u^n) \geq \frac{1}{2}(c_0n\tau-c')^2, 
$$
where $H(u)$ is the energy defined in \eqref{eq:HV}. In particular, 
$$
\lim_{n \to \infty} \Norm{u^n}{H^1} = +\infty \quad \mbox{and} \quad \lim_{n \to \infty} H(u^n) = +\infty. 
$$
where $H(u)$ is the energy \eqref{eq:HNLS}. 
\end{theorem}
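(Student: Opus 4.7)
The proof will mirror the linear case, but instead of a commutator argument we exploit the fact that $W_q$ is a function algebra, so that the nonlinear potential flow $\varphi_\tau$ itself leaves $W_q$ invariant; the preceding lemma then reduces $u^{n+1}=e^{i\tau\Delta}\varphi_\tau(u^n)$ to $u^{n+1}=\varphi_\tau(u^n)$, after which Lemma~\ref{eq:lem1} does the work.

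\emph{Step 1: invariance of $W_q$ under $\varphi_\tau$.} A direct convolution argument shows that $\widehat{uv}(k)=\sum_j \hat u(j)\hat v(k-j)$ vanishes unless $k\equiv 0\pmod q$ whenever $u,v\in W_q$, and complex conjugation sends $W_q$ to $W_q$ since $k\equiv 0\pmod q \iff -k\equiv 0\pmod q$. Thus $|u|^2=u\bar u\in W_q$, every power $|u|^{2m}\in W_q$, and by the uniform convergence of the power series, $e^{-i\sigma\tau|u|^2}\in W_q$. Since $W_q$ is stable by multiplication, $\varphi_\tau(u)=e^{-i\sigma\tau|u|^2}u\in W_q$, so $W_q$ is invariant by $\varphi_\tau$.

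\emph{Step 2: reduction to the potential flow and iteration.} By induction on $n$ using the lemma preceding the theorem and Step~1, every iterate $u^n$ lies in $W_q$, and on $W_q$ the operator $e^{i\tau\Delta}$ acts as the identity for $\tau=2\pi p/q$ or $\tau=2\pi p/q^2$. Hence $u^{n+1}=\varphi_\tau(u^n)$. Because $\varphi_\tau$ only modifies the phase pointwise, $|u^n(x)|^2=|u^0(x)|^2$ for all $n$, and iterating the explicit formula \eqref{eq:flown} gives
$$
u^n(x)=\exp\!\bigl(-i\sigma n\tau |u^0(x)|^2\bigr)u^0(x).
$$

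\emph{Step 3: application of Lemma~\ref{eq:lem1} and energy bound.} Taking $V=\sigma|u^0|^2$, $u=u^0$ and $t=-n\tau$ in Lemma~\ref{eq:lem1}, and noting that $\|V'(x)u^0(x)\|_{L^2}=\|u^0\,\partial_x(|u^0|^2)\|_{L^2}=c_0$ since $|\sigma|=1$, we obtain $\|u^n\|_{H^1}\ge c_0 n\tau-c$ with $c=\|u^0\|_{H^1}$. For the energy \eqref{eq:HNLS}, the quartic term $\frac{1}{8\pi}\int|u^n|^4=\frac{1}{8\pi}\int|u^0|^4$ is constant in $n$, and the $L^2$ norm of $u^n$ is preserved, so that
$$
H(u^n)=\tfrac{1}{2}\bigl(\|u^n\|_{H^1}^2-\|u^0\|_{L^2}^2\bigr)+\tfrac{\sigma}{8\pi}\int|u^0|^4\,dx\ \geq\ \tfrac12\|u^n\|_{H^1}^2-C,
$$
with $C$ depending only on $u^0$. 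Combining with the $H^1$ lower bound and enlarging the constant gives $H(u^n)\ge\tfrac12(c_0 n\tau-c')^2$ (the bound is trivial when the right hand side is negative, and for large $n$ the constant $C$ is absorbed into $c'$). The only conceptually novel point is the algebra property of $W_q$; everything else is a direct transcription of the linear case.
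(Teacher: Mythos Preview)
Your proof is correct and follows the same overall strategy as the paper: show $W_q$ is invariant under $\varphi_\tau$, use the preceding lemma to kill $e^{i\tau\Delta}$, iterate to get $u^n=e^{-i\sigma n\tau|u^0|^2}u^0$, and apply Lemma~\ref{eq:lem1}. Your Step~1 is in fact more carefully justified than the paper's one-line appeal to ``$H^1$ is an algebra''.

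The one noteworthy difference is in the energy bound. The paper invokes the Gagliardo--Nirenberg inequality $\Norm{u}{L^4}^4\le C\Norm{u}{H^1}\Norm{u}{L^2}^3$ to obtain $H(u)\ge\frac12\Norm{u}{H^1}^2-C\Norm{u}{H^1}\Norm{u}{L^2}^3$, which works for any $u$ with controlled $L^2$ norm. You instead exploit the specific structure of the iterates: since $|u^n|=|u^0|$ pointwise, the quartic term $\frac{\sigma}{8\pi}\int|u^n|^4$ is a fixed constant, so $H(u^n)=\frac12\Norm{u^n}{H^1}^2+\text{const}$ directly. Your route is more elementary and gives a cleaner constant; the paper's route is more robust (it would survive, for instance, a perturbation where $|u^n|$ is only approximately constant, and it is the argument that carries over to the fully discrete case where Gagliardo--Nirenberg is genuinely needed).
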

\begin{proof}
From the expression \eqref{eq:flown} and the fact that $H^1$ is an algebra in dimension one\footnote{That is there exists $C$ such that  $\Norm{uv}{H^1} \leq C \Norm{u}{H^1} \Norm{v}{H^1}$ for all functions $u$, $v$. },  we observe that $\varphi_t$ maps $W_{q}$ to itself. Hence by the previous Lemma, we have that 
$$
u^{n} = \varphi_{\tau}(u^{n-1}) = \varphi_{n\tau}(u^0) = \exp(-i \tau n |u^0(x)|^2) u^0(x).
$$
The growth of the $H^1$ norm is then easily obtained using Lemma \ref{eq:lem1}. 

To conlude, we use the Gagliardo-Nirenberg inequality stating that 
$$
\Norm{u}{L^4}^4 \leq C \Norm{u}{H^1} \Norm{u}{L^2}^3, 
$$
for some constant $C$, where 
$$
\Norm{u}{L^4} = \left(\frac{1}{2\pi} \int_0^{2\pi} |u(x)|^4 \dd x\right)^{1/4}. 
$$
Note that this inequality is a direct consequence of the standard inequality 
\begin{equation}
\label{eq:ulinf}
\Norm{u}{L^\infty}^2 \leq C \Norm{u}{L^2} \Norm{u}{H^1}
\end{equation}
whose proof is left to the reader. 
This implies that 
$$
H(u) \geq \frac12 \Norm{u}{H^1}^2 - C \Norm{u}{H^1}\Norm{u}{L^2}^3
$$
which gives the result, as the $L^2$ norm of the semi-discrete solution is preserved. 
\end{proof}

\section{Fully discrete case}

\subsection{Notations, preliminary results}

We now consider the case of fully discrete splitting, where the space discretization is done using a Fourier  pseudo-spectral method. 
\subsubsection*{Space discretization : grid points}
Let us first consider the space discretization of \eqref{eq:sch}. For an even integer $K$, we set 
$$
\BK = \Big\{ - \frac{K}{2}, \ldots, \frac{K}{2} - 1\Big\}. 
$$
We define the grid points $x_j = \frac{2\pi j}{K}$, for $j \in \BK$. These grid points belong to the interval $[-\pi,\pi[$. 
Moreover, we set $\dx:=\frac{2\pi}{\K}$. 

\subsubsection*{Fourier transform} 
With this grid, we associate the discrete Fourier transform $\Fc_K: \C^{\BK} \to \C^{\BK}$ defined by the formula
$$
(\Fc_K v)_j = \frac1K \sum_{k\in \BK} e^{-2i\pi jk/K} v_k. 
$$
Its inverse is given by 
$$
(\Fc_K^{-1} v)_k = \sum_{j \in \BK} e^{2i\pi kj/K} v_j. 
$$
Note that we usually use the same notation to denote the transformation $\Fc_K$ and its matrix representation as linear transformation of $\C^{\BK}$. 
With this convention, it is easy to see that $\sqrt{K} \Fc_K$ is a unitary transformation of $\C^K$ equipped with the discrete $L^2$ norm 
\begin{equation}
\label{eq:ell2}
\Norm{U}{\ell^2} : = \frac{2\pi}{K} \sum_{k \in \BK} |U_k|^2. 
\end{equation}

\subsubsection*{The pseudo-spectral Fourier method}
We search for a function 
$$
U_K(t,x) = \sum_{j \in \BK} e^{i j x} \hat U_j(t)
$$
satisfying \eqref{eq:sch} at each grid points $x_k$, $k \in \BK$, that is the relation 
$$
\forall\, k \in \BK, \quad \partial_t U_K(t,x_k) = - \Delta U_K(t,x_k) + f(x_k,|U_K|^2) U_K(t,x_k), 
$$
where $f(x_k,|U_K|^2) = V(x_k)$ in the linear case, and $f(x_k,|U_K|^2) = \sigma |U_K(t,x_k)|^2$ in the nonlinear case. 
 The coefficients $\hat U = (\hat U_j)_{j \in \BK}$ are the discrete Fourier coefficients, and 
setting the notation $U = (U_k)_{k \in \BK}$ with $U_k(t) = U_K(t,x_k)$, we have the relation $\hat U(t) = \Fc_K U(t)$. Moreover the vector $U(t)$ satisfies the following system of ordinary differential equations
$$
i \frac{\dd}{\dd t} U(t) = \Delta^K U(t) + f^K(U) U(t)
$$
where 
$$
\Delta^K = \Fc_K^{-1} D^K \Fc_K\quad \mbox{where} \quad D^K = \mathrm{diag} (|j|^2), \quad j \in \BK, 
$$
and 
$$
f^K(U) = \mathrm{diag}( f( x_k,|U_K|^2(t,x_k)), \quad k \in \BK. 
$$
\subsubsection*{Energy} 
The energy associated with the previous system is given by 
\begin{equation}
\label{eq:HK}
H^K = \frac{\pi}{K} \Big( U^T \Delta^K U + U^T f^K(U) U\Big).
\end{equation}
Alternatively, the equation satisfied by $\hat U$ is given by 
$$
\frac{\dd}{\dd t} \hat U(t) = D^K \hat U(t) + ( \Fc_K f^K(U) \Fc_K^{-1}) \hat U(t). 
$$

\subsubsection*{The fully discrete splitting method} 
Applied to the space discretized equation, the splitting method consists in solving alternatively 
\begin{equation}\label{eq:DeltaK}
i \frac{\dd}{\dd t} \hat U(t) = D^K \hat U(t), \quad \Longleftrightarrow\quad \forall\, j \in \BK, \quad \hat U_j(t)  = \exp( - i \tau |j|^2) \hat U_j(0)
\end{equation}
which we denote by $U(t) = e^{it\Delta^K} U(0)$, and 
\begin{multline}
\label{eq:vd}
i \frac{\dd}{\dd t} U(t) = f^K(t)  U(t), \\\quad \Longleftrightarrow\quad \forall\, k \in \BK, \quad  U_k(t)  = \exp( - i t f(x_k,|U_k(0)|^2)) U_k(0). 
\end{multline}
and to use the discrete Fourier transforms $\Fc_K$ and $\Fc_K^{-1}$ to switch from the $U_k$ to the $\hat U_j$.  The approximation $U^n$ of $U(n\tau)\in \C^{\BK}$ is defined by the induction formula
$$
U^{n+1}:=e^{i\tau\Delta^K}\circ  \varphi_\tau (U^n),
$$
where $\varphi_t(U)$ is the solution of equation \eqref{eq:vd} at time $t$ with the initial condition $U$. Note that this scheme preserves the discrete $\ell^2$ norm \eqref{eq:ell2}. 

\subsubsection*{Resonant space}
We recall that in the semi-discrete case, we show some resonance phenomena when the potential $V$ (in the linear case) or the initial condition (in the nonlinear case) belong to the space of functions $W_q$ defined in \eqref{eq:Wq}. In the fully discrete case, in order to obtain discrete spaces $W_q$, we assume that $K$ is a multiple of $q$, namely of the form $K=\ka q$ with $\ka \in2\Z$. We then introduce the spaces of functions $W_{\ka,q}$ defined as follow
\begin{equation}\label{eq:Wkq}
W_{\ka,q}:=\{U_j \in \C^{\BK} \ | \  (\mathcal{F}_K U)_j =0 \text{ for } j\neq 0 \mod q\}.
\end{equation}
Observe that when $K=\ka q$, the elements $U$ of $W_{\ka,q}$ are of the form
$$
U_\indk=\sum_{j \in \BKO}e^{ijqx_\indk}\hat U_{jq}.
$$

\subsubsection*{Discrete $h^1$ norms, estimates}
Following the same approach as in the previous section, it is clear that the use of resonant time step will cancel the averaging effect of the free flow, and we will be lead to analyze the growth of energy along solutions of \eqref{eq:vd}. As the reader can see, this equation is easily written in terms of $U$ and not in terms of $\hat U$, while for the kinetic energy, it is the opposite. That is why we introduce the following norms: 
\begin{equation}
\label{eq:n1}
\Norm{U}{h^1}^2 = \frac{2\pi}{K}\sum_{k \in \BK}  \left |Ê\frac{U_{k+1} - U_k}{\delta x}\right|^2 \quad \mbox{and} \quadÊ T^K(\hat U)  = \frac{1}{K} \sum_{k \in \BK} |k|^2|\hat U_k|^2, 
\end{equation}
with the convention that $U_{K/2} = U_{-K/2}$ (periodicity). 
Note that the last term represents the kinetic energy associated with the energy $H^K$ given in \eqref{eq:HK}, and that 
$$
\Norm{\Fc_K^{-1} \hat U}{h^1}^2 \neq T^K(\hat U). 
$$
However, we have the following 
\begin{lemma}\label{Lem:TK}
There exist constants $c$ and $C$ such that for all $U \in \C^{\BK}$, we have 
$$
c \Norm{U}{h^1}^2 \leq T^K(\Fc_K U) \leq C \Norm{U}{h^1}^2. 
$$
\end{lemma}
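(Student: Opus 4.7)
The plan is to diagonalize both sides simultaneously via the discrete Fourier transform and reduce the equivalence to an elementary pointwise bound on $|\sin t|$. Writing $\hat U = \mathcal{F}_K U$ and expanding $U_k = \sum_{j\in\BK} e^{2i\pi jk/K}\hat U_j$, the first step is to compute
$$
U_{k+1}-U_k = \sum_{j\in\BK} e^{2i\pi jk/K}(e^{2i\pi j/K}-1)\,\hat U_j,
$$
so that $V_k:=U_{k+1}-U_k$ has Fourier coefficients $\widehat{V}_j = (e^{2i\pi j/K}-1)\hat U_j$. The Parseval identity $\sum_k|V_k|^2 = K\sum_j |\widehat V_j|^2$ combined with $|e^{2i\pi j/K}-1|^2 = 4\sin^2(\pi j/K)$ and $\delta x = 2\pi/K$ then yields the explicit formula
$$
\Norm{U}{h^1}^2 \;=\; \frac{2\pi}{K\,\delta x^2}\sum_{k\in\BK}|U_{k+1}-U_k|^2 \;=\; \frac{2K^2}{\pi}\sum_{j\in\BK}\sin^2\!\Bigl(\frac{\pi j}{K}\Bigr)|\hat U_j|^2.
$$

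The remaining ingredient is a uniform comparison of $\sin^2(\pi j/K)$ with $(\pi j/K)^2$. Since $j\in\BK$ forces $|j|\leq K/2$, the argument $\pi j/K$ stays in $[-\pi/2,\pi/2]$, a range on which the classical elementary inequality $\tfrac{2|t|}{\pi}\leq |\sin t|\leq |t|$ holds. Applied with $t = \pi j/K$, this produces the two-sided bound
$$
\frac{4\,j^2}{K^2} \;\leq\; \sin^2\!\Bigl(\frac{\pi j}{K}\Bigr) \;\leq\; \frac{\pi^2 j^2}{K^2},
$$
valid uniformly in $j$ and $K$. Substituting these bounds into the Fourier identity above compares $\Norm{U}{h^1}^2$ directly with $\sum_{j\in\BK} j^2|\hat U_j|^2$, which up to the prescribed normalization is precisely $T^K(\mathcal{F}_K U)$.

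The only point requiring real care is the bookkeeping of the various normalization factors --- the $2\pi/K$ in the definition of $\Norm{\cdot}{h^1}$, the $1/K$ in the definition of $T^K$, the $\delta x^{-2}$ from the difference quotient, and the factor $K$ produced by Parseval --- and verifying that they recombine into the absolute constants $c$ and $C$ claimed by the lemma. The argument is mechanical once the Fourier identity above is in place, and the boundary frequencies $j = \pm K/2$ require no special treatment since the sine inequality is valid at the endpoints.
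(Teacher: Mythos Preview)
Your proposal is correct and follows essentially the same route as the paper: both proofs expand $U_{k+1}-U_k$ in the discrete Fourier basis, use orthogonality to obtain $\Norm{U}{h^1}^2$ as a weighted sum $\sum_j |e^{i(\delta x)j}-1|^2 |\hat U_j|^2$ (up to normalization), and then reduce the comparison with $T^K$ to the elementary fact that $|e^{ix}-1|/|x|$ is bounded above and below on $[-\pi,\pi)$. Your phrasing via the Jordan inequality $\tfrac{2|t|}{\pi}\le |\sin t|\le |t|$ on $[-\pi/2,\pi/2]$ is exactly the same estimate, since $|e^{ix}-1|=2|\sin(x/2)|$.
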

\begin{proof}
We write explicitly (using the periodicity condition)
$$
 U_{k+1} - U_k = \sum_{j \in \BK} ( e^{i (\delta x ) (k +1) j } - e^{i (\delta x) kj}) \hat U_j = \sum_{j \in \BK} ( e^{i (\delta x) j }  -1) e^{i (\delta x) k j } \hat U_j, 
 $$
 which yields, for $k \in \BK$, 
 $$
| U_{k+1} - U_k|^2 =  \sum_{j,j' \in \BK} ( e^{i (\delta x) j }  -1) ( e^{- i (\delta x) j' }  -1)  e^{i (\delta x) k (j - j')} \hat U_j \overline{\hat U_{j'}}. 
 $$
Summing in $k$, and using the fact that 
 $$
 \sum_{k \in \BK} e^{i (\delta x) k m } = \left\{\begin{array}{rl} 0&  \mbox{if}\quad  m \neq 0 \mod{K}\\[2ex]
 1 & \mbox{if} \quad m = 0 \mod K, 
 \end{array} 
 \right.
 $$
 we obtain
  $$
\sum_{k \in \BK}| U_{k+1} - U_k|^2 =  \sum_{j \in \BK} | e^{i (\delta x) j }  -1 |^2 |\hat U_j|^2
 $$
 From this relation, we obtain 
 $$
 \Norm{U}{h^1}^2 =  \delta x \sum_{j \in \BK} \left| \frac{e^{i (\delta x) j }  -1 }{(\delta x)  j}\right|^2 |j|^2 |\hat U_j|^2
 $$
 and we easily conclude by using the fact that the function $x \mapsto |\frac{e^{i x} - 1}{x}|$ is uniformly bounded from above and below on the interval $[-\pi,\pi[$.  
 \end{proof}

\subsubsection*{Discrete Gagliardo-Nirenberg}
Finally, we will also need the following discrete version of the Gagliardo-Nirenberg inequality: 

\begin{lemma}\label{Lem:GNdiscret}
There exists a constant such that 
for all $K$ and  all $U = (U_k)_{k \in \BK} \in \C^{\BK}$, we have 
$$
\delta x \sum_{k \in \BK} |U_k|^4 \leq C \Norm{U}{h^1} \Norm{U}{\ell^2}^3. 
$$
\end{lemma}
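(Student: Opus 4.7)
The plan is to mimic the classical one--dimensional proof of the Gagliardo--Nirenberg inequality, substituting a pigeonhole average and a summation by parts for the corresponding integral manipulations. First I would reduce the lemma to a discrete $\ell^\infty$--Sobolev bound by the trivial observation
$$
\delta x \sum_{k \in \BK} |U_k|^4 \leq \Bigl(\max_{k \in \BK} |U_k|^2\Bigr) \cdot \delta x \sum_{k \in \BK} |U_k|^2 = \Bigl(\max_{k \in \BK} |U_k|^2\Bigr) \cdot \Norm{U}{\ell^2}^2,
$$
so the task is reduced to establishing a discrete analog of inequality \eqref{eq:ulinf}, namely $\max_k |U_k|^2 \leq C \Norm{U}{h^1}\Norm{U}{\ell^2}$ (up to a lower-order $\Norm{U}{\ell^2}^2$ contribution from the mean, exactly parallel to the continuous convention in which $H^1$ denotes the full norm).

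To prove this embedding, pigeonhole first supplies an index $k^\star \in \BK$ with
$$
|U_{k^\star}|^2 \leq \frac{1}{K}\sum_{j \in \BK} |U_j|^2 = \frac{1}{2\pi}\Norm{U}{\ell^2}^2.
$$
For an arbitrary $k \in \BK$, I would use the telescoping identity
$$
|U_k|^2 = |U_{k^\star}|^2 + \sum_{j \in \gamma(k^\star,k)} \bigl(|U_{j+1}|^2 - |U_j|^2\bigr)
$$
along a periodic path $\gamma(k^\star,k) \subset \BK$ joining $k^\star$ to $k$, extend the sum over all of $\BK$, and apply the elementary identity $\bigl||a|^2 - |b|^2\bigr| \leq |a-b|\bigl(|a|+|b|\bigr)$ together with Cauchy--Schwarz to obtain
$$
\max_{k} |U_k|^2 - |U_{k^\star}|^2 \leq \Bigl(\sum_{j \in \BK}|U_{j+1}-U_j|^2\Bigr)^{1/2}\Bigl(\sum_{j \in \BK}(|U_{j+1}|+|U_j|)^2\Bigr)^{1/2}.
$$
Substituting the identities $\sum_j |U_{j+1}-U_j|^2 = \delta x\,\Norm{U}{h^1}^2$ and $\sum_j|U_j|^2 = \delta x^{-1}\Norm{U}{\ell^2}^2$ (which follow directly from the definitions \eqref{eq:n1} and \eqref{eq:ell2} since $\delta x = 2\pi/K$), the powers of $\delta x$ cancel and the right-hand side reduces to $C \Norm{U}{h^1}\Norm{U}{\ell^2}$. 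Combined with the averaging bound on $|U_{k^\star}|^2$ and then multiplied by $\Norm{U}{\ell^2}^2$, this gives the desired Gagliardo--Nirenberg estimate.

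The step that will require the most care is the bookkeeping of the normalizations $\delta x$, $K$ and $2\pi$ so that the constant $C$ is genuinely $K$-independent; the elementary identity $\bigl||a|^2-|b|^2\bigr| \leq |a-b|(|a|+|b|)$ and the passage from a single path $\gamma(k^\star,k)$ to the full sum over $\BK$ (which only loosens the inequality) are the only non-trivial algebraic ingredients, and the rest is a routine discrete translation of the classical one-dimensional argument.
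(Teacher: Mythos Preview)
Your argument is correct and is precisely the ``direct discrete translation of the classical one-dimensional argument'' that the paper alludes to when it says the proof is the same as in the continuous case; the paper gives no further details beyond that remark and the alternative suggestion of deducing the estimate from the continuous $L^\infty$ bound \eqref{eq:ulinf} by piecewise linear interpolation of the grid values. Your parenthetical observation that the discrete $h^1$ quantity in \eqref{eq:n1} is only a seminorm (it vanishes on constants) and that the bound therefore really reads $\max_k|U_k|^2 \le C\bigl(\Norm{U}{h^1}\Norm{U}{\ell^2}+\Norm{U}{\ell^2}^2\bigr)$ is well taken: the paper tacitly assimilates $\Norm{\cdot}{h^1}$ to a full $H^1$-type norm here, consistent with the continuous convention used elsewhere in the text.
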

\begin{proof}
The proof of this inequality is the same as in the continuous case, or can be deduced from \eqref{eq:ulinf} using piecewise linear interpolation for example. 
\end{proof}

\subsection{Flow of the fully discrete potential part}
In the fully discrete case, it is easy to see that there exists a constant $C$ such that for all $U \in \C^{\BK}$, 
\begin{equation}
\label{eq:inverse}
\Norm{U}{h^1} \leq C K \Norm{U}{\ell^2}. 
\end{equation}
Hence, as the fully discrete splitting method described above preserves the $\ell^2$ norm of the discrete solution, we cannot observe a drift of the $h^1$ norm even for the flow of the potential part. However, we will prove below that for this discrete flow, the $h^1$ norm can become of order $K$ in a time of order $\mathcal{O}(K)$, which corresponds to a discrete version of the drift described in the previous section. 
In the following for a given vector $U = (U_k) \in \C^{\BK}$, we set 
$$
\Norm{U}{\ell^\infty} = \max_{k \in \BK} | U_k|. 
$$

\begin{lemma}\label{KeyLemmaDiscret}
Let $U = (U_k)_{k \in \BK} \in \C^{\BK}$ and $V = (V_k)_{k \in \BK} \in \R^{\BK}$. Then for $n \tau$ and $\delta x$ satisfying 
\begin{equation}
\label{eq:estn}
n \tau \delta x \Big\| \frac{V_{k+1} - V_k}{\delta x} \Big\|_{\ell^\infty} \leq \pi, 
\end{equation}
we have 
\begin{equation}
\label{eq:esti}
\| e^{i n \tau V_k} U_k \|_{h^1} \geq \frac{2}{\pi}n \tau \Big\|   \Big( \frac{V_{k+1} - V_k}{\delta x}\Big) U_k \Big\|_{\ell^2} -  \Norm{U}{h^1}
\end{equation}
\end{lemma}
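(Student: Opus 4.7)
The plan is to mimic the continuous proof of Lemma~\ref{eq:lem1}, with $\partial_x$ replaced by the forward finite difference $(W_{k+1}-W_k)/\dx$, and the pointwise Leibniz identity by its discrete counterpart
$$
\frac{e^{in\tau V_{k+1}} U_{k+1} - e^{in\tau V_k} U_k}{\dx}
 = e^{in\tau V_{k+1}} \frac{U_{k+1}-U_k}{\dx}
 + \frac{e^{in\tau V_{k+1}} - e^{in\tau V_k}}{\dx}\, U_k .
$$
Taking the discrete $\ell^2$-norm \eqref{eq:ell2} of both sides, the left-hand side is, by definition of the $h^1$ norm \eqref{eq:n1}, exactly $\Norm{e^{in\tau V_k} U_k}{h^1}$. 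The first term on the right has $\ell^2$-norm equal to $\Norm{U}{h^1}$ because $|e^{in\tau V_{k+1}}|=1$. By the reverse triangle inequality, it suffices to bound from below the $\ell^2$-norm of the second term $C_k := (e^{in\tau V_{k+1}} - e^{in\tau V_k})\, U_k / \dx$.

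The key step is the pointwise lower bound on $|e^{in\tau V_{k+1}} - e^{in\tau V_k}| = 2\big|\sin\big(n\tau (V_{k+1}-V_k)/2\big)\big|$. Here the hypothesis \eqref{eq:estn} is exactly what is needed: it guarantees that $n\tau |V_{k+1}-V_k|/2 \le \pi/2$ for every $k \in \BK$. On this range the elementary inequality $|\sin y| \ge (2/\pi)|y|$ applies and yields
$$
|e^{in\tau V_{k+1}} - e^{in\tau V_k}| \ge \frac{2}{\pi}\, n\tau\, |V_{k+1}-V_k|.
$$
Squaring, multiplying by $|U_k|^2/\dx^2$, summing in $k$ against the weight $2\pi/K$ and extracting the square root produces
$$
\Norm{C}{\ell^2} \;\ge\; \frac{2}{\pi}\, n\tau \Big\| \frac{V_{k+1}-V_k}{\dx}\, U_k \Big\|_{\ell^2},
$$
and combining with the reverse triangle inequality above gives exactly \eqref{eq:esti}.

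The only subtle point is the linearization of sine, which is what forces the hypothesis \eqref{eq:estn}: without that smallness assumption the exponentials could wrap around the unit circle, so that $e^{in\tau V_{k+1}}-e^{in\tau V_k}$ may be small or vanish even when $V_{k+1}-V_k$ is not, and no such pointwise lower bound could hold. Everything else reduces to a discrete product rule and an application of the triangle inequality, so I expect no further obstacle.
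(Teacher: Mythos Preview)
Your proof is correct and follows essentially the same route as the paper: a discrete Leibniz decomposition of the forward difference, the reverse triangle inequality, and the elementary bound $|e^{ix}-1|\ge \frac{2}{\pi}|x|$ on $[-\pi,\pi]$ (which you express equivalently via $|\sin y|\ge \frac{2}{\pi}|y|$ on $[-\pi/2,\pi/2]$). The only cosmetic difference is that the paper factors out $e^{in\tau V_k}$ rather than $e^{in\tau V_{k+1}}$ in front of $(U_{k+1}-U_k)$, which is immaterial since both have modulus one.
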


\begin{proof}
We have for all $k \in \BK$, 
$$
 e^{i n \tau V_{k+1}} U_{k+1} -   e^{i n \tau V_k} U_k  = e^{i n \tau V_k}\Big( U_{k+1} - U_{k} \Big) + U_k \Big( e^{i n \tau V_{k+1}} - e^{i n \tau V_{k}}\Big). 
$$
The first term in the right-hand side gives the term $- \Norm{U}{h^1}$ in \eqref{eq:esti}. Hence we are led to prove that for all $n$, $\tau$ and $\delta x$ satisfying \eqref{eq:estn}, we have 
$$
\forall\, k \in \BK, \quad 
|e^{i n \tau (V_{k+1} - V_k)} - 1| \geq \frac{2}{\pi}n \tau |V_{k+1} - V_k|
$$
which is a consequence of the fact that 
$$
\forall\, x \in [-\pi,\pi], \quad | e^{-ix} - 1| \geq \frac{2}{\pi} |x|. 
$$
\end{proof}
As a corollary, we see that if $V_k$ and $U_k$ are given and satisfy uniform estimates of the form 
$$
\forall \delta x,\quad  \Big\| \frac{V_{k+1} - V_k}{\delta x} \Big\|_{\ell^\infty} + \Norm{U}{h^1}\leq C \quad \mbox{and} \quad \Big\|   \Big( \frac{V_{k+1} - V_k}{\delta x}\Big) U_k \Big\|_{\ell^2}\geq c
$$
we obtain 
$$
\| e^{i n \tau V_k} U_k \|_{h^1} \simeq  (\delta x)^{-1}\quad \mbox{for a time} \quad t  \simeq(\delta x)^{-1}, 
$$
which is a discrete version of Lemma \ref{eq:lem1}. Note that such estimate shows that the inequality \eqref{eq:inverse} is saturated owing to $K \simeq (\delta x)^{-1}$. 

\subsection{Linear equation}
With the notations introduced above, we consider here the particular case 
$$
f(x_k, |U_k|^2)=V(x_k) =: V_k, \quad k \in \BK, 
$$
where $V(x)$ is a given potential. In particular, $\varphi_\tau$ reads
$$
\varphi_t(U)=e^{-it \mathrm{diag} (V_k)}U.
$$
In this case it is proved in \cite{F11} that under a CFL condition, there exists a modified energy ensuring global $h^1$ bound for the numerical solution (see Theorem V.13 and Corollary V.14 in \cite{F11}). 

\begin{theorem}
Let $(p,q) \in \Z \times \N^*$, $\kappa \in 2\Z$, $K = \kappa q$  and $V \in W_{\ka,q}$. 
For all $n$, and all $\UK^0 \in \C^{\BK} $, we define the sequence $\UK^n$ by the induction formula
$$
\UK^{n+1} = \exp( i \tau \Delta^K ) \circ \exp( -  i \tau \mathrm{diag} (V_k)) \UK^{n}, \quad n \geq 0, \quad \tau = 2\pi \frac{p}{q}. 
$$
We assume that there exist $C_0, C_1, C_2$ and $c_0$ such that for all $K$, 
\begin{equation}\label{eq:hyp1}
\Norm{V}{\ell^\infty}\leq C_0, \quad \Big\| \frac{V_{k+1} - V_k}{\delta x} \Big\|_{\ell^\infty} \leq C_1, \quad  \Norm{U^0}{h^1}\leq C_2,
\end{equation}
and 
\begin{equation}\label{eq:hyp2}
c_0\leq \Big\|   \Big( \frac{V_{k+1} - V_k}{\delta x}\Big) U^0_k \Big\|_{\ell^2}.
\end{equation}
Then there exist some constants $c,c',c'', c'''$ such the for for all $n, \tau, \dx$ satisfying the relation
$$
n\tau\leq \frac{\pi}{\delta x \, C_1},
$$
the sequence $\UK^n$ verifies the estimates
$$
\Norm{U^n}{h^1}\geq cn\tau -c', \quad\quad H^K(U^n)\geq (c''n\tau-c''')^2.
$$
\end{theorem}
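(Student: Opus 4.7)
The plan is to adapt the strategy of the semi-discrete linear theorem to the fully discrete setting, replacing the continuous growth lemma (Lemma~\ref{eq:lem1}) by the discrete version (Lemma~\ref{KeyLemmaDiscret}) and using the equivalence between $T^K$ and the $h^1$ norm provided by Lemma~\ref{Lem:TK}. The two ingredients to establish are: (i) the commutation $[e^{i\tau\Delta^K},\mathrm{diag}(V_k)]=0$ under the resonance assumption, reducing the iteration to a pure multiplication by $e^{-in\tau V}$ up to an $h^1$-equivalent isometry; (ii) the application of the discrete key lemma to deduce the linear-in-$n\tau$ lower bound.

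For (i), conjugating by $\Fc_K$ turns multiplication by $\mathrm{diag}(V_k)$ into the circulant matrix with entries $\hat V_{(j-j')\!\!\mod K}$, while $e^{i\tau\Delta^K}$ becomes $\mathrm{diag}(e^{i\tau|j|^2})$. The $(j,j')$ entry of the commutator is
$$
\hat V_{(j-j')\!\!\mod K}\,e^{i\tau|j'|^2}\bigl(e^{i\tau(|j|^2-|j'|^2)}-1\bigr).
$$
If $(j-j')\not\equiv 0\!\!\mod q$, then $(j-j')\!\!\mod K$ is also not $\equiv 0\!\!\mod q$ because $K=\kappa q$ is a multiple of $q$; so by $V\in W_{\ka,q}$, the Fourier coefficient vanishes. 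Otherwise $j-j'=sq$ for some $s\in\Z$, and then
$$
\tau(|j|^2-|j'|^2)=\tfrac{2\pi p}{q}\,sq(j+j')=2\pi p s(j+j')\in 2\pi\Z,
$$
so the second factor vanishes. Thus $[e^{i\tau\Delta^K},\mathrm{diag}(V_k)]=0$, hence $[e^{i\tau\Delta^K},e^{-i\tau\mathrm{diag}(V_k)}]=0$, and by induction $U^n=e^{in\tau\Delta^K}\,e^{-in\tau\mathrm{diag}(V_k)}U^0$.

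For (ii), since $e^{in\tau\Delta^K}$ is diagonal in the Fourier basis with unit modulus entries, it preserves $T^K(\Fc_K\cdot)$. Combining this with the two-sided inequality of Lemma~\ref{Lem:TK} gives a uniform constant $c_1>0$ such that
$$
\Norm{U^n}{h^1}\ge c_1\,\Norm{e^{-in\tau\mathrm{diag}(V_k)}U^0}{h^1}.
$$
The hypotheses \eqref{eq:hyp1}–\eqref{eq:hyp2} and the restriction $n\tau\le \pi/(\delta x\,C_1)$ exactly match the assumptions of Lemma~\ref{KeyLemmaDiscret}, whose conclusion yields
$$
\Norm{e^{-in\tau V}U^0}{h^1}\ge \tfrac{2}{\pi}c_0\,n\tau-C_2.
$$
Absorbing constants gives the first estimate $\Norm{U^n}{h^1}\ge c\,n\tau-c'$.

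For the energy bound, I would decompose $H^K(U^n)=\pi T^K(\Fc_K U^n)+\frac{\pi}{K}\sum_k V_k|U^n_k|^2$. The second term is bounded in absolute value by $\tfrac12 C_0\Norm{U^n}{\ell^2}^2=\tfrac12 C_0\Norm{U^0}{\ell^2}^2$, since the scheme preserves $\Norm{\cdot}{\ell^2}$; and Lemma~\ref{Lem:TK} gives $\pi T^K(\Fc_K U^n)\ge c_2\Norm{U^n}{h^1}^2$. Using the previous step and relabelling constants yields $H^K(U^n)\ge (c''n\tau-c''')^2$. The most delicate step is the commutator computation (i): one must track carefully that the discrete (circular) convolution only creates aliasing by multiples of $K$, and the assumption $q\mid K$ is precisely what guarantees that aliasing preserves the divisibility by $q$ of the frequency index, which is the property that makes the resonance argument go through.
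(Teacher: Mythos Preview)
Your proof is correct and follows essentially the same approach as the paper: the commutator computation in the Fourier basis (your circulant formulation $\hat V_{(j-j')\bmod K}$ is exactly the paper's $\hat V_{j-k}+\hat V_{j-k+K}+\hat V_{j-k-K}$ specialized to indices in $\BK$), the reduction to $e^{-in\tau V}U^0$ via commutation, the transfer through $T^K$ and Lemma~\ref{Lem:TK}, and the energy bound via Lemma~\ref{Lem:TK} and $\ell^2$-conservation all match the paper's argument step for step.
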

\begin{remark}
Suppose that $U^0_k = u^0(x_k)$ and $V_k = V(x_k)$ are the space discretizations of some $L^2$ functions $u^0$ and $V$. Then on one hand, observe that \eqref{eq:hyp1} holds for all $K$ as soon as $\Norm{V}{L^\infty}, \Norm{V'}{L^\infty}$ and $\Norm{u^0}{L^2}$ are finite. On the other hand, to get \eqref{eq:hyp2} for all $K$, it is sufficient to assume that $\Norm{u^0V'}{L^2}\neq 0$.
\end{remark}

\begin{proof}
Given that $V\in W_{\ka,q}$, we have $\hat V_k:=\left(\mathcal{F}_K(V)\right)_\indk = 0$ for $\indk \neq 0 \mod q$. Let us consider the linear operators $U\mapsto VU$ and $U\mapsto e^{2i\pi \frac{p}{q}\Delta^K}U$ expressed in Fourier basis. Then the commutator reads 
\begin{eqnarray*}
([e^{2i\pi \frac{p}{q}\Delta}, V])_{jk} &=& (\hat{V}_{j-k}+\hat{V}_{j-k+K}
 +\hat{V}_{j-k-K}) (e^{2i\pi \frac{p}{q} j^2} - e^{2i\pi \frac{p}{q}k^2})  \\
&=&e^{2i\pi \frac{p}{q}k^2}(\hat{V}_{j-k}+\hat{V}_{j-k+K}+\hat{V}_{j-k-K}) (e^{2i\pi \frac{p}{q}(j^2-k^2)}-1) . 
\end{eqnarray*}
On one hand, when $j-k \neq 0 \mod q$, from $V\in W_{\ka,q}$ and $K=\ka q\equiv 0 \mod q$, the term is zero. On the other hand, when $j - k = 0 \mod q$, then $(j^2 - k^2) = \alpha q (j + k)$ and 
$$
2\pi \frac{p}{q} (j^2 - k^2) = 2\pi \alpha p (j + k) \in 2 \pi \Z. 
$$
so that the commutator vanishes again. 
Hence we have $[e^{2i\pi \frac{p}{q}\Delta^K}, V] = 0$ from which we easily deduce that $[e^{2i\pi \frac{p}{q} \Delta^K}, e^{-2i\pi \frac{p}{q} \mathrm{diag}(V_k)}] = 0$. 
This implies that 
$$
\UK^n = e^{2in\pi \frac{p}{q}\Delta^K} \circ e^{- 2in\pi \frac{p}{q} \mathrm{diag}(V_k)} \UK^0. 
$$
In the continuous case above, at this step of the proof we used that $e^{2in\pi \frac{p}{q}\Delta}$ is an isometry for the $H^1$-norm. Here, we use $T^K$, which is not a norm but is preserved by the flow of $\Delta^K$, and is linked to $\Norm{\cdot}{h^1}$ by the result of Lemma \ref{Lem:TK}.  We get
\begin{equation}
\Norm{\UK^n}{h^1}^2\geq \frac{1}{C}T^K\circ \Fc_K(e^{2in\pi\frac{p}{q}V}\UK^0)\geq \frac{c}{C}\Norm{e^{2in\pi\frac{p}{q}V}\UK^0}{h^1}^2, 
 \end{equation}
where $c$ and $C$ are the constants introduced in Lemma \ref{Lem:TK}. Finally, from Lemma \ref{KeyLemmaDiscret} together with the assumptions \eqref{eq:hyp1} and \eqref{eq:hyp2}, we obtain

$$\Norm{\UK^n}{h^1}\geq \frac{2\sqrt{c}}{\pi\sqrt{C}}n\tau \Big\|   \Big( \frac{V_{k+1} - V_k}{\delta x}\Big) U^0_k \Big\|_{\ell^2}-\Norm{\UK^0}{h^1}\geq cn\tau-c'.$$

This proves the first estimate of the lemma.  To get the estimate of $H^K$, observe that
$$
H^K(U)= \pi T^K(\mathcal{F}_K U) +\frac{\pi}{K}U^T \mathrm{diag}(V_k) U \geq c\Norm{U}{h^1}^2 - \Norm{V}{\ell^\infty}\Norm{U}{\ell^2}^2,
$$
where the inequality comes from Lemma \ref{Lem:TK}. Thus, from assumption \eqref{eq:hyp1} and given that $\Norm{U^n}{\ell^2}$ is constant, we get
$$
H^K(U^n)\geq (c''n\tau - \sqrt{c}c')^2 -  C_0\Norm{U^0}{\ell^2}^2\geq (c''n\tau -c''')^2.
$$
\end{proof}

\subsection{Nonlinear equation}
We consider the cubic nonlinear Schr\"odinger equation on the grid $(x_\indk)_{\indk\in\BK}$. This means that with the notations introduced above, we consider here the particular case 
$$
f(x_k, |U_k|^2)=\sigma|U_k|^2.
$$
where $\sigma \in \{\pm 1\}$. Here $\varphi_t(\UK_0)$ has the following explicite form
\begin{equation}\label{Phit}
\varphi_t(\UK^0)(x_\indk) = \exp(-\sigma i t |\UK^0(x_\indk)|^2) \UK^0(x_\indk), \quad \forall \indk\in\BK. 
\end{equation}
In this case, it is shown in \cite{F11} that under a CFL condition of the form 
\begin{equation}
\label{cflf11}
\tau K^2 < \frac{8\pi}{N+1}
\end{equation}
it is possible to construct a modified energy, and obtain global $h^1$ bound for small solutions for a time of order $\tau^{-N}$ (see Theorem VI.8 and Corollary VI.10 in \cite{F11}). 
Here, we prove the following: 
\begin{theorem}
Let $(p,q) \in \N^*\times \Z$ and $K=\ka q$. For a given $\UK^0$ in $\C^{\BK}$, we define for all $n$ the $\UK^n$ by 
$$ \UK^{n+1} = \exp( i \tau \Delta^K ) \circ \varphi_{\tau }(\UK^n), \quad n \geq 0, \quad \tau = 2\pi \frac{p}{q}.$$
We suppose that $\UK^0 \in W_{\ka,q}$ and that there exist $C_0, C_1, C_2$ and $c_0$ such that for all $K$, 
\begin{equation}\label{eq:hyp1bis}
\Norm{\UK^0}{\ell^\infty}\leq C_0, \quad \Big\| \frac{|\UK^0_{k+1}|^2 - |\UK^0_k|^2}{\delta x} \Big\|_{\ell^\infty} \leq C_1, \quad  \Norm{U^0}{h^1}\leq C_2,
\end{equation}
and 
\begin{equation}\label{eq:hyp2bis}
c_0\leq \Big\|   \Big( \frac{|\UK^0_{k+1}|^2 - |\UK^0_k|^2}{\delta x}\Big) U^0_k \Big\|_{\ell^2}.
\end{equation}
Then there exist some constants $c,c',c'',c'''$ such that for all $n, \tau, \dx$ satisfying the relation
$$
n\tau\leq \frac{\pi}{\dx \ C_1},
$$
the sequence $\UK^n$ verifies the estimates
$$
\Norm{U^n}{h^1}\geq cn\tau -c', \quad\quad H^K(U^n)\geq (c''n\tau-c''')^2.
$$
Moreover, the same result holds true for time steps of the form 
$$
\tau = 2\pi \frac{p}{q^2}. 
$$
\end{theorem}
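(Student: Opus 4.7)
The plan is to combine the invariance argument of the semi-discrete nonlinear theorem with the fully discrete machinery (Lemmas \ref{KeyLemmaDiscret}, \ref{Lem:TK}, \ref{Lem:GNdiscret}) used in the fully discrete linear theorem.

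First, I would show that both flows leave $W_{\ka,q}$ invariant and, more importantly, that $e^{i\tau\Delta^K}$ acts as the identity on $W_{\ka,q}$ for either choice of $\tau$. The identity statement reduces to the following Fourier computation: if $\hat U_j \neq 0$ then $j = mq$ for some $m \in \BKO$, and
$$\tau |j|^2 = 2\pi\frac{p}{q}(mq)^2 = 2\pi p q m^2 \quad \text{or} \quad \tau |j|^2 = 2\pi\frac{p}{q^2}(mq)^2 = 2\pi p m^2,$$
both lying in $2\pi\Z$. For the invariance of $W_{\ka,q}$ under $\varphi_t$, I would rely on the observation (already noted just after the definition of $W_{\ka,q}$) that its elements are precisely the $\ka$-periodic sequences in $k \in \BK$; pointwise operations such as $U_k \mapsto e^{-i\sigma t |U_k|^2} U_k$ trivially preserve this periodicity. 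Combining the two facts inductively yields the explicit expression $U^n = \varphi_{n\tau}(U^0) = \exp(-i \sigma n\tau |U^0|^2)\, U^0$, paralleling the semi-discrete nonlinear case.

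Next, I would apply Lemma \ref{KeyLemmaDiscret} with the real vector $V_k := \sigma |U^0_k|^2$. Hypothesis \eqref{eq:hyp1bis} supplies the uniform $\ell^\infty$ bound on $(V_{k+1}-V_k)/\delta x$ needed (together with the restriction $n\tau \leq \pi/(\delta x\, C_1)$) to satisfy \eqref{eq:estn}, while \eqref{eq:hyp2bis} provides exactly the lower bound on $\|((V_{k+1}-V_k)/\delta x)\, U^0_k\|_{\ell^2}$ required to extract from \eqref{eq:esti} the linear-in-$n$ growth $\Norm{U^n}{h^1} \geq c\, n\tau - c'$ for appropriate positive constants. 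To pass from $h^1$ growth to energy growth, I would combine the discrete Gagliardo--Nirenberg inequality of Lemma \ref{Lem:GNdiscret} with Lemma \ref{Lem:TK} to obtain a bound of the form
$$H^K(U) \geq c\Norm{U}{h^1}^2 - C\Norm{U}{h^1}\Norm{U}{\ell^2}^3,$$
and then use the conservation of the discrete $\ell^2$ norm by the splitting and the implicit bound $\Norm{U^0}{\ell^2} \leq C_0$ coming from \eqref{eq:hyp1bis} to conclude the quadratic lower bound $H^K(U^n) \geq (c''n\tau - c''')^2$.

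The main potential obstacle is the invariance of $W_{\ka,q}$ under the nonlinear flow: in the continuous setting this relied on the Banach algebra property of $H^1$ in dimension one, whereas in the fully discrete framework it becomes essentially trivial thanks to the $\ka$-periodicity characterization of $W_{\ka,q}$. Everything else is a mechanical combination of the fully discrete linear argument and the semi-discrete nonlinear argument; note that the sign of $\sigma$ enters only through the Gagliardo--Nirenberg bound on the quartic term of $H^K$ and is absorbed harmlessly there.
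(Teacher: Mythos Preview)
Your proposal is correct and follows essentially the same route as the paper's proof: show that $e^{i\tau\Delta^K}$ acts as the identity on $W_{\ka,q}$ via the Fourier computation, observe that $\varphi_t$ preserves $W_{\ka,q}$, deduce $U^n=\exp(-i\sigma n\tau|U^0|^2)U^0$, then apply Lemma~\ref{KeyLemmaDiscret} with $V_k=\sigma|U^0_k|^2$ and combine Lemmas~\ref{Lem:TK} and~\ref{Lem:GNdiscret} with $\ell^2$ conservation for the energy bound. Your justification of the invariance of $W_{\ka,q}$ under $\varphi_t$ via the $\ka$-periodicity characterization is a bit more explicit than the paper (which simply asserts it), but the argument is otherwise identical.
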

\begin{remark}
Suppose that $U^0_k = u^0(x_k)$ is the space discretization of some $L^2$ function $u^0$. Then on one hand, \eqref{eq:hyp1bis} holds for all $K$ as soon as $\Norm{u^0}{L^{\infty}}, \Norm{(u^0)'}{L^{\infty}}$ and $\Norm{u^0}{L^2}$ are finite. On the other hand, to get \eqref{eq:hyp2bis} for all $K$, it is sufficient to assume that $\Norm{u^0(u^0)'}{L^2}\neq 0$.
\end{remark}
\begin{remark}
\label{rk38}
Taking a stepsize of the form $\tau =  \frac{2\pi}{q^2}$ and $K = 2 q$ we obtain a CFL number $\tau K^2 = 8\pi $ and we obtain a linear drift in $h^1$ norm for times up to $(\delta x)^{-1} \simeq K \simeq \tau^{-1/2}$. 
This can be compared with the CFL condition \eqref{cflf11} required to construct a modified energy and obtain $H^1$ bound for a time of order $\tau^{-N}$. In other words, this shows that the CFL condition  \eqref{cflf11} is sharp.  
\end{remark}
\begin{proof}
We observe first that  for all $t$, $\varphi_t$ maps $W_{\ka,q}$ to itself. 

Secondly, let us prove that for all $\UK$ in $W_{\ka,q}$, the flow of $\Delta^K$ satisfies
$$\exp(2\I\pi\frac{p}{q}\Delta^K)\UK=\UK.$$ 
To show that, we recall that any $\UK$ of $W_{\ka,q}$ reads
$$\UK(x_\indk)=\sum_{j\in\BKO}\E^{\I x_\indk jq}\hat\UK_{jq}.$$
Then, we use the explicit form \eqref{eq:DeltaK} of the flow of $\Delta^K$ in the Fourier basis. We get
\begin{eqnarray}
\E^{2\I\pi\frac{p}{q}\Delta^K} \ \UK(x_\indk)=\sum_{j\in\BKO}\E^{\I x_\indk jq}\E^{\I(jq)^2 2\pi\frac{p}{q}} \hat\UK_{jq}=\sum_{j\in\mathcal{B}^k}\E^{\I x_\indk jq+2\I\pi j^2pq} \hat \UK_{jq}=\UK(x_\indk).\nonumber
\end{eqnarray}

Finally, we get that the $\UK^{n}$ read
$$
\UK^{n}_k = \varphi_{2\pi \frac{p}{q}}(\UK^{n-1})_k = \varphi_{2n \pi \frac{p}{q}}(\UK^0_k) = \exp(-\sigma i n\tau |\UK^0_k|^2) \UK^0_k, 
$$
and obtain the first estimate of Lemma \ref{KeyLemmaDiscret}. 

To get the estimate of the $H^K(U^n)$, observe that for any $U$,
$$
H^K(U)= \pi T^K(\mathcal{F}_K U) +\sigma \delta x \sum_{k \in \BK} |U_k|^4 \geq c\Norm{U}{h^1}^2 - C\Norm{U}{h^1}\Norm{U}{\ell^2}^3,
$$
where the inequality comes from Lemma \ref{Lem:TK} together with Lemma \ref{Lem:GNdiscret}. Thus, from the previous estimate obtained on $\Norm{U^n}{h^1}$ and given that $\Norm{U^n}{\ell^2}$ is constant, we get
$$
H^K(U^n)\geq (c''n\tau -c''')^2.
$$
The proof for time steps of the form $\tau = 2\pi \frac{p}{q^2}$ is the same. 

\end{proof}


\begin{thebibliography}{99}

%
%

\bibitem{BFG}
{\rm D. Bambusi, E. Faou and B. Gr\'ebert}
{\em Existence and stability of ground states for fully discrete approximations of the nonlinear Schr\"odinger equation}. Numer. Math. 123 (2013) 461--492
%
\bibitem{BG06}
{\rm D. Bambusi and B. Gr\'ebert},
{\em Birkhoff normal form for PDE's with tame modulus}. Duke Math. J.  135  no. 3 (2006), 507Ð-567.
%
%
%
%
%
%
%
%

\bibitem{DF09}
{\rm A. Debussche and E. Faou},
{\em Modified energy for split-step methods applied to the linear Schr\"odinger equation}
SIAM J. Numer. Anal. 47 (2009) 3705--3719.

\bibitem{DF07}
{\rm G. Dujardin and E. Faou},
{\em Normal form and long time analysis of splitting schemes for the linear Schr{\"o}dinger equation with small potential.}
Numerische Mathematik 106, 2 (2007) 223--262 



\bibitem{F11}
{\rm E. Faou},
{\em Geometric numerical integration and Schr\"odinger equations}. European Math. Soc., 2012. 

\bibitem{FGL1}
{\rm E. Faou, L. Gauckler and C. Lubich}
{\em Sobolev stability of plane wave solutions to the cubic nonlinear Schr\"odinger equation on a torus},
Comm. PDE 38 (2013) 1123-1140

\bibitem{FGL2}
{\rm E. Faou, L. Gauckler and C. Lubich}
{\em Plane wave stability of the split-step Fourier method for the nonlinear Schr\"odinger equation}, preprint. 

\bibitem{FG11}
{\rm E. Faou and B. Gr\'ebert},
{\em Hamiltonian interpolation of splitting approximations for nonlinear PDE's}. 
Found. Comput. Math. 11 (2011) 381--415

%

%
%

\bibitem{FGP1}
{\rm E. Faou, B. Gr\'ebert and E. Paturel},
{\em Birkhoff normal form for splitting methods applied to semilinear Hamiltonian PDEs. Part I: finite-dimensional discretization.} Numer. Math.  114 (2010) 429--458. 

\bibitem{FGP2}
{\rm E. Faou, B. Gr\'ebert and E. Paturel},
{\em Birkhoff normal form for splitting methods applied to semilinear Hamiltonian PDEs. Part II: Abstract splitting.} Numer. Math. 114 (2010) 459--490. 


\bibitem{GL08a}
{\rm L. Gauckler and C. Lubich},
{\em Nonlinear Schr\"odinger equations and their spectral discretizations over long times},
Found. Comput. Math. 10 (2010), 141-169.

\bibitem{GL08b}
{\rm L. Gauckler and C. Lubich},
{\em Splitting integrators for nonlinear Schr\"odinger equations over long times},
Found. Comput. Math. 10 (2010), 275--302.

%
%
%
%
%
%
%
%

\bibitem{JL00}
{\rm T. Jahnke and C. Lubich}, 
{\em Error bounds for exponential operator splittings}, BIT 40 (2000) 735-744. 

%
\bibitem{L08a}
{\rm C. Lubich}, {\em On splitting methods for Schr\"odinger-Poisson and cubic nonlinear Schr\"odinger equations}, Math. Comp. 77 (2008), 2141-2153.


%
%
%
%
%
%

%
%
%
%
%

\end{thebibliography}
\end{document}